\newtheorem{theorem}{Theorem}
\newtheorem{lemma}{Lemma}
\theoremstyle{remark}
\newtheorem*{remark*}{Remark}
\newcommand{\Q}{\mathbb{Q}}
\begin{document}

\title{On the Galois group over $\mathbb Q$ \\ of a truncated binomial expansion}

\author{Michael Filaseta\\
Department of Mathematics \\ 
University of South Carolina \\ 
Columbia, SC 29208 \\
E-mail:  filaseta@math.sc.edu
\and
Richard Moy\\
Department of Mathematics \\
Willamette University\\
Salem, OR 97301\\
E-mail:  rmoy@willamette.edu}

\date{}
\maketitle
 
\centerline{\parbox[t]{13cm}{R\'ESUM\'E: Pour tout nombre entier positif $n$, les extensions binomiales tronqu\'ees de $(1 + x)^n $ constitut\'ees de tous les termes de degr\'e $\leq r $ o\`u $1 \leq  r  \leq n-2$ semblent toujours \^etre irr\'eductibles. Pour $r$ fixe et $n$ suffisamment grand, ce r\'esultat est connu. Nous montrons ici que, pour un nombre entier positif fixe $ r \neq 6$  et $n$ suffisamment grand, le groupe Galois d'un tel polyn\^ome sur les nombres rationnels est le groupe sym\'etrique $S_r$. Pour $r = 6$, nous montrons que le nombre de $n \leq N$ exceptionnels pour lesquels le groupe Galois de ce polyn\^ome n'est pas $S_r$ est au plus $O(\log N)$.}}

\vskip 15pt \noindent
\centerline{\parbox[t]{13cm}{ABSTRACT: For positive integers $n$, the truncated binomial expansions of $(1+x)^n$ which consist of all the terms of degree $\le r$ where $1 \le r \le n-2$ appear always to be irreducible.  For fixed $r$ and $n$ sufficiently large, this is known to be the case.  We show here that for a fixed positive integer $r \ne 6$ and $n$ sufficiently large, the Galois group of such a polynomial over the rationals is the symmetric group $S_{r}$.  For $r = 6$, we show the number of exceptional $n \le N$ for which the Galois group of this polynomial is not $S_r$ is at most $O(\log N)$.}}
 
\vskip 10pt \noindent
\section{Introduction}
For $t$ and $r$ non-negative integers, we define
\[
p_{r,t}(x) = \sum_{j=0}^{r} \binom{t+j}{t} x^j = \sum_{j=0}^{r} \binom{t+j}{j} x^j.
\]
This polynomial arises from a normalization of the $t^{\rm th}$ derivative of $1+x+ \cdots +x^{t+r}$. 
The polynomial is connected to a factor of the Shabat polynomials of a family of 
\emph{dessins d'enfant} which are trees and have passport size one (cf.~\cite[Example 3.3]{NMA}). 
The polynomial $p_{r,t}(x)$ was conjectured to be irreducible, and the irreducibility was studied in \cite{BFLT}.  
In particular, we find there that if $r$ is fixed, then $p_{r,t}(x)$ is irreducible for $t$ sufficiently large.  
A generalization of this irreducibility result can be found in \cite{FKP}, where this polynomial was
considered in a different form.   There, the irreducibility of the polynomial
\[
q_{r,n}(x) = \sum_{j=0}^{r} \binom{n}{j} x^{j},
\]
which is a truncated binomial expansion of $(x+1)^{n}$, was investigated.    
As noted there, this truncated binomial expansion came up in 
investigations of the Schubert calculus in Grassmannians \cite{ISch}.   
Other results concerning these polynomials can be found in \cite{DK, DS, KKL}. 

There are some identities involving $p_{r,t}(x)$ and $q_{r,n}(x)$ which helped establish the 
results found in \cite{BFLT} and \cite{FKP}.  If we define 
\[
\tilde{p}_{r,t}(x) = x^{r}p_{r,t}(1/x) = \sum_{j=0}^{r} \binom{t+j}{j} x^{r-j},
\]
then according to \cite{BFLT} we have
\[
\tilde{p}_{r,t}(x+1) = \sum_{j=0}^{r} \binom{t+r+1}{j} x^{r-j}.
\]
Thus, $\tilde{p}_{r,t}(x+1) = x^{r} q_{r,t+r+1}(1/x)$.  We have from \cite{FKP} that
\[
q_{r,n}(x-1) = \sum_{j=0}^{r} c_{j} x^{j},
\quad \text{ where }
c_{j} = \binom{n}{j} \binom{n-j-1}{r-j} (-1)^{r-j}.
\]
As noted in \cite{FKP}, we can write
\begin{equation*}
c_{j} = \dfrac{(-1)^{r-j} n (n-1) \cdots (n-j+1) (n-j-1) \cdots (n-r+1) (n-r)}{j! (r-j)!}.
\end{equation*}

These identities are of interest as the irreducibility over $\mathbb Q$ of one of 
$p_{r,t}(x)$, $\tilde{p}_{r,t}(x)$, $\tilde{p}_{r,t}(x+1)$ and $q_{r,t+r+1}(x-1)$ implies the irreducibility of the other three.  
Furthermore, it is not difficult to see that these polynomials all have the same discriminant
(as reversing the coefficients of a polynomial and translating do not affect the discriminant).  
Also, as the roots for each all generate the same number field, we have that for
a fixed $r$ and $t$, the Galois groups
over $\mathbb Q$ associated with these polynomials are all the same.  

The main goal of this paper is to show that these polynomials give rise to examples 
of polynomials having Galois group over $\mathbb Q$ the symmetric group.  

\begin{theorem}\label{mainthm}
Let $r$ be an integer $\ge 2$ with $r \ne 6$.  If $t$ is a sufficiently large positive integer, then the Galois group
associated with any one of $p_{r,t}(x)$, $\tilde{p}_{r,t}(x)$, $\tilde{p}_{r,t}(x+1)$ and $q_{r,t+r+1}(x-1)$
over $\mathbb Q$ is the symmetric group $S_{r}$.  In the case that $r = 6$, there are at most $O(\log T)$ 
values of $t \le T$ for which the Galois group of any one of $p_{r,t}(x)$, $\tilde{p}_{r,t}(x)$, $\tilde{p}_{r,t}(x+1)$ and $q_{r,t+r+1}(x-1)$
over $\mathbb Q$ is not the symmetric group $S_{6}$.  In these cases, for sufficiently large $t$, 
the Galois group is $PGL_{2}(5)$, a transitive subgroup of $S_{6}$ isomorphic to $S_{5}$.  
\end{theorem}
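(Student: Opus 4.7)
\medskip
\noindent\textbf{Proof plan.} My plan is to identify the Galois group $G$ by exhibiting explicit cycles in it through Newton polygon factorizations of $q_{r,n}(x-1)=\sum_{j=0}^{r}c_j x^j$ with $n=t+r+1$, using the closed form $c_j=(-1)^{r-j}\prod_{\ell\ne j}(n-\ell)/(j!(r-j)!)$ that makes the $p$-adic valuations of the coefficients completely transparent. The irreducibility results of \cite{BFLT,FKP} already guarantee that $G$ is a transitive subgroup of $S_r$ once $t$ is sufficiently large, so the task is to cut the list of transitive subgroups of $S_r$ down to $\{S_r\}$ when $r\ne 6$, and to $\{S_6,\,PGL_2(5)\}$ when $r=6$.

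For any prime $p>r$ with $p^a\|(n-i)$, every $c_j$ with $j\ne i$ inherits exactly one factor of $p^a$ from the product formula while $c_i$ does not, so $v_p(c_j)=a$ for $j\ne i$ and $v_p(c_i)=0$. The Newton polygon at $p$ thus consists of two segments joined at $(i,0)$, of slopes $-a/i$ on $[0,i]$ and $a/(r-i)$ on $[i,r]$; when expressed in lowest terms each slope gives an irreducible $\mathbb Q_p$-factor of degree equal to the horizontal length. Taking $a=1$ and $i\in\{1,r-1\}$ places an $(r-1)$-cycle in $G$, while $a=1$ and $i\in\{2,r-2\}$ places a transposition.

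Next I would verify that the required primes exist whenever $t$ is large. The absence of a prime $p>r$ with $p\|(n-i)$ forces $n-i$ to be a product of primes $\le r$ with squared large prime factors --- a ``powerful times smooth'' number --- and such integers are sparse, so for $t$ sufficiently large at least one of the shifts $i\in\{1,r-1\}$ and at least one of $i\in\{2,r-2\}$ produces the desired prime, except possibly in degenerate cases where the two shifts essentially coincide, which happens only for $r=6$ (where $\{2,r-2\}=\{2,4\}$ differ by only $2$). Once $G$ contains both an $(r-1)$-cycle and a transposition, the $(r-1)$-cycle makes the point stabilizer transitive on the remaining $r-1$ symbols, so $G$ is $2$-transitive and hence primitive; Jordan's theorem that a primitive subgroup of $S_r$ containing a transposition equals $S_r$ then finishes the case $r\ne 6$.

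For $r=6$ the transposition step can break only when both $n-2$ and $n-4$ are powerful-times-smooth, which gives a Pell-type system $s_1u^2-s_2v^2=2$ with $s_1,s_2$ restricted to $6$-smooth numbers; the solutions of each such Pell-like equation grow exponentially, and after summing over the polylogarithmically many admissible pairs $(s_1,s_2)$ a careful accounting yields $O(\log T)$ exceptional $t\le T$. In any such exception one still extracts a $5$-cycle from the $i=1$ or $i=5$ Newton polygon, and the only $2$-transitive subgroup of $S_6$ containing a $5$-cycle but no transposition is $PGL_2(5)$, forcing $G=PGL_2(5)$ as claimed. The principal obstacle of the proof is precisely this last quantitative step for $r=6$ --- sharpening the exponential-gap counting to obtain the announced $O(\log T)$ bound, and verifying that $G$ cannot slip below $PGL_2(5)$ in those cases by using the extra cycle information that is still available.
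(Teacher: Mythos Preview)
Your plan has the right skeleton---Newton polygons of $q_{r,n}(x-1)$ at primes $p>r$ with $p^a\|(n-i)$ really do look as you describe, and the inertia generator really does act as an $(r-1)$-cycle when $i\in\{1,r-1\}$ and $a=1$. But the argument breaks down at two points, and the first one is fatal for every $r\ne 6$.

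\textbf{Sparsity is not finiteness.} Your claim that ``for $t$ sufficiently large at least one of the shifts $i\in\{1,r-1\}$ and at least one of $i\in\{2,r-2\}$ produces the desired prime'' is exactly what the theorem needs, but your justification (``powerful-times-smooth numbers are sparse'') does not give it. If no prime $p>r$ satisfies $p\|(n-2)$ and none satisfies $p\|(n-(r-2))$, then $n-2=s_1u^2$ and $n-(r-2)=s_2v^2$ with $s_1,s_2$ $r$-smooth (allowing the powerful part to be a perfect square already covers infinitely many cases). For each fixed pair $(s_1,s_2)$ this is a Pell-type equation $s_1u^2-s_2v^2=r-4$, which typically has \emph{infinitely many} solutions growing like a geometric progression. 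The same happens for the shifts $\{1,r-1\}$. So your exceptional set is a finite union of Pell families and is only $O(\log T)$, not finite, for every $r$---you would end up proving the $r=6$ conclusion for all $r$. The paper avoids this precisely by choosing a \emph{prime} $q\in(r/2,r-2)$ and looking at $i\in\{q,r-q\}$: the failure case is then $au^q-bv^q=2q-r$, a Thue equation of degree $q\ge 3$ (in fact $q\ge5$ for $r\ge 8$), which has only finitely many integer solutions. For $r\le 7$ the paper falls back on cubic Thue equations via $3$-cycles. Degree $\ge 3$ is the whole point; your degree-$2$ constraints cannot yield finiteness.

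\textbf{You do not get a transposition when $r$ is even.} With $a=1$ and $i=2$, the tame inertia generator has cycle type $(2,r-2)$ on the roots. When $r$ is odd, raising to the $(r-2)$nd power gives a transposition; when $r$ is even it gives the identity, and no power is a transposition. Extracting an honest transposition from $\mathrm{Gal}(K/\mathbb{Q}_p)$ then requires the kind of case analysis the paper carries out for $r=6$ (comparing $F_g$ and $F_h$), and even there the paper only obtains a $(2,4)$-element, not a transposition. So ``places a transposition'' is unjustified for even $r$. Relatedly, for $r=6$ your identification of $PGL_2(5)$ as ``the only $2$-transitive subgroup of $S_6$ containing a $5$-cycle but no transposition'' is false: $A_6$ and $PSL_2(5)\cong A_5$ also qualify. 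You need the discriminant computation (Lemma~\ref{discrimnonsquarelemma}) to eliminate those, and your plan never invokes it.
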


\noindent
Observe that Theorem~\ref{mainthm} has as an immediate consequence that 
for a fixed integer $r \ge 2$ and $r \ne 6$, the Galois group of $q_{r,n}(x)$ over $\mathbb Q$
is $S_{r}$ provided $n$ is sufficiently large with a similar result for almost all $n$ in the case that $r = 6$. 
We note that $q_{6,10}(x)$ has Galois group $PGL_{2}(5)$.  
The proof of Theorem~\ref{mainthm} will give, up to a finite number of exceptions,
an explicit description of the set $\mathcal N$ of the $O(\log T)$ values of $t \le T$ 
where the Galois group $PGL_{2}(5)$ might occur.  
We explain computations that verify directly that for $10 < n \le 10^{10}$ and $n \in \mathcal N$, 
the Galois group of $q_{6,n}(x)$ is $S_{6}$.  The bound of $10^{10}$ can easily be extended much 
further.  However, we note that there may still be $n \in (10,10^{10}]$ for which $q_{6,n}(x)$ is reducible
so that $q_{6,n}(x)$ does not have Galois group $S_{6}$ since
the explicitly given $\mathcal N$ does not take into account that our proof
that $q_{6,n}(x)$ has Galois group $S_{6}$ requires $n$ to be sufficiently large so that, in particular, the
results from \cite{BFLT} and \cite{FKP} imply $q_{6,n}(x)$ is irreducible.  Nevertheless, based on further
computations, we conjecture that $q_{6,n}(x)$ has Galois group $S_{6}$ for all $n \ge 11$.

\section{Preliminary Material}

We will make use of Newton polygons, which we describe briefly here.  
Let $f(x) = \sum_{j=0}^{r} a_{j}x^{j} \in \mathbb Z[x]$ with $a_{0} a_{r} \ne 0$, and 
let $p$ be a prime.  
For an integer $m \ne 0$, we use $\nu_{p}(m)$ to denote the exponent in the largest power of $p$ dividing $m$.  
Let $S$ be the set of lattice points $\big(j, \nu_{p}(a_{r-j})\big)$, for $0 \le j \le r$ with $a_{r-j} \ne 0$.  
The polygonal path along the lower edges of the convex hull of these points from 
$\big(0, \nu_{p}(a_{r})\big)$ to $\big(r, \nu_{p}(a_{0})\big)$
is called the Newton polygon of $f(x)$ with respect to the prime $p$.  
The left-most edge has an endpoint $\big(0, \nu_{p}(a_{r})\big)$ and the right-most edge has an endpoint
$\big(r, \nu_{p}(a_{0})\big)$.  The endpoints of every edge belong to the set $S$, 
and each edge has a distinct slope that increases as we move along the Newton polygon from left to right.  

Newton polygons provide information about the factorization of $f(x)$ over the $p$-adic field $\mathbb Q_{p}$
and, hence, information about the Galois group of $f(x)$ over $\mathbb Q_{p}$.  As this Galois group is a subgroup
of the Galois group of $f(x)$ over $\mathbb Q$, we can use Newton polygons to obtain information about
the Galois group of $f(x)$ over $\mathbb Q$.  Recalling that we are viewing edges of Newton polygons 
as having distinct slopes, each edge of the Newton polygon of $f(x)$ with respect to
a prime $p$ corresponds to a factor of $f(x)$ in $\mathbb Q_{p}[x]$ that is not necessarily irreducible.  
More precisely, if an edge has endpoints $(x_{1},y_{1})$ and $(x_{2},y_{2})$, then its slope
$a/b = (y_{2}-y_{1})/(x_{2}-x_{1})$ with $\gcd(a,b) = 1$ is such that $f(x)$ has a factor $g(x)$ in $\mathbb Q_{p}[x]$
of degree $x_{2}-x_{1}$ and with each irreducible factor of $g(x)$ in $\mathbb Q_{p}[x]$ of degree a multiple of $b$.  

We comment here that a theorem of Dedekind (cf.~\cite{DAC}) allows one to obtain information about the 
Galois group associated with a polynomial $f(x)$ over $\mathbb Q$ by looking at the polynomials factorization 
modulo a prime $p$.  More precisely, suppose $f(x)$ is an irreducible polynomial in $\mathbb Z[x]$ and $p$ is a prime
which does not divide its discriminant.  Suppose further that $f(x)$ factors modulo $p$
as a product of $r$ irreducible polynomials of degrees $d_{1}, \ldots, d_{r}$.  Then Dedekind's Theorem asserts
that the Galois group of $f(x)$ over $\mathbb Q$ contains an element that is the product of $r$ disjoint cycles 
with cycle lengths $d_{1}, \ldots, d_{r}$.  

Our main tool for establishing Theorem~\ref{mainthm} is based on combining some of the above ideas with
a theorem of C.~Jordan \cite{CJ} and noted in work of R.~Coleman \cite{RFC}.  It has been cast in a 
convenient form by F.~Hajir \cite{FH}, which we summarize as follows.

\begin{lemma}\label{hajirlemma}
Let $f(x)$ be an irreducible polynomial of degree $r$, and suppose $q$ is a prime in the interval $(r/2,r-2)$
such that the Newton polygon with respect to some prime $p$ has an edge with slope $a/b$ where 
$a$ and $b$ are relatively prime integers and $q|b$.  Let $\Delta$ be the discriminant of $f(x)$.  
Then the Galois group of $f(x)$ over $\mathbb Q$ is the alternating group $A_{r}$ if $\Delta$ is a square 
and is the symmetric group $S_{r}$ if $\Delta$ is not a square.
\end{lemma}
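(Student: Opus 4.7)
The plan is to use the edge of slope \(a/b\) to place a \(q\)-cycle inside the Galois group \(G\) of \(f\) over \(\mathbb{Q}\), then invoke Jordan's theorem to force \(G\supseteq A_r\), and finish by distinguishing \(A_r\) from \(S_r\) via the discriminant.

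First I would translate the Newton polygon input into a local factorization of \(f\). An edge of slope \(a/b\) with \(\gcd(a,b)=1\) has horizontal length a positive multiple of \(b\), so the length is at least \(b\ge q>r/2\). Combined with length \(\le r<2q\), this forces both the length and \(b\) itself to equal \(q\). The Newton polygon factorization theorem recalled in the preliminaries then yields a factor \(g(x)\in\mathbb{Q}_p[x]\) of \(f\) of degree \(q\) whose every irreducible component has degree divisible by \(q\), so \(g\) is itself irreducible; its single slope \(a/q\) with \(\gcd(a,q)=1\) shows that a root \(\alpha\) of \(g\) generates a totally ramified extension \(\mathbb{Q}_p(\alpha)/\mathbb{Q}_p\) of degree \(q\).

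Next I would extract the \(q\)-cycle. For a prime \(\mathfrak{P}\) of the splitting field of \(f\) lying over \(p\), a standard orbit computation shows that the inertia subgroup \(I\) acts on the \(q\) roots of \(g\) transitively, the orbit size being equal to the ramification index \(e(\mathbb{Q}_p(\alpha)/\mathbb{Q}_p)=q\). Since the image of \(I\) in the symmetric group on these \(q\) roots is a transitive subgroup of \(S_q\), Cauchy's theorem supplies some \(\sigma\in I\le G\) that restricts to a \(q\)-cycle on the \(g\)-roots. There are only \(r-q<q\) other roots of \(f\), so the action of \(\sigma\) on them has order coprime to \(q\); replacing \(\sigma\) by the corresponding power kills that action while preserving the \(q\)-cycle on the \(g\)-roots, yielding a pure \(q\)-cycle in \(G\).

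Finally, the irreducibility of \(f\) makes \(G\) transitive, and the hypotheses \(q>r/2\) and \(q\le r-3\) place us in the classical range of Jordan's theorem, which concludes \(G\supseteq A_r\); the alternative \(G=A_r\) versus \(G=S_r\) is then decided by whether the discriminant \(\Delta\) is a square in \(\mathbb{Q}\). The main obstacle I anticipate is the local-to-global extraction of the pure \(q\)-cycle: the orbit-size identity and the clean separation of the action on \(g\)-roots from the action on the remaining roots both depend essentially on the double inequality \(r/2<q\le r-3\) together with the primality of \(q\).
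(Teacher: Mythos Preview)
The paper does not actually prove this lemma: it is quoted from Hajir~\cite{FH} (building on Jordan~\cite{CJ} and Coleman~\cite{RFC}) as an imported tool, with no argument given. So there is no ``paper's own proof'' to compare against; what you have written is essentially a reconstruction of the standard proof, and it is correct.

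One point deserves to be made explicit. In your final step you pass from ``$G$ is transitive'' directly to Jordan's theorem, but Jordan's theorem requires $G$ to be \emph{primitive}. The missing line is routine: a transitive group of degree $r$ containing a $q$-cycle with $q>r/2$ prime is automatically primitive, since a nontrivial block system with block size $d\mid r$, $1<d<r$, would force either $d\ge q>r/2$ (if the $q$-cycle stabilizes a block meeting its support) or $qd\le r<2q$ (if it moves such a block through a $\langle\sigma\rangle$-orbit of length $q$), and both are impossible. With primitivity in hand, the hypothesis $q\le r-3$ puts you squarely in Jordan's range and the conclusion $G\supseteq A_r$ follows; the discriminant criterion then finishes as you say. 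Everything else in your outline---the reduction $b=q$ and edge length $q$, the irreducibility and total ramification of the degree-$q$ local factor, the inertia-orbit count giving a $q$-cycle on the $g$-roots, and the coprime-power trick to kill the action on the remaining $r-q<q$ roots---is accurate.
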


We also make use of the following result from \cite{KC} and \cite{DM} (Theorem 3.3A).

\begin{lemma}\label{kconradlemma}
Let $f(x)$ be an irreducible polynomial of degree $r \ge 2$.  
If the Galois group of $f(x)$ over $\mathbb Q$ contains a $2$-cycle and a $q$-cycle for some 
prime $q > r/2$, then the Galois group is $S_{r}$.  Alternatively, if the Galois group of $f(x)$ over 
$\mathbb Q$ contains a $3$-cycle and a $q$-cycle for some 
prime $q > r/2$, then the Galois group is either the alternating group $A_{r}$ or 
the symmetric group $S_{r}$.
\end{lemma}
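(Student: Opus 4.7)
The plan is to regard the Galois group $G$ as a subgroup of $S_{r}$ via its action on the $r$ roots of $f(x)$, show that $G$ is primitive, and then invoke classical theorems of Jordan about primitive permutation groups that contain a transposition or a $3$-cycle.

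First, since $f(x)$ is irreducible over $\mathbb Q$, the group $G$ acts transitively on the roots. This is the only place where irreducibility is used.

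Next I would establish primitivity. Suppose, for contradiction, that $G$ preserves a non-trivial block system in which every block has common size $d$, with $1 < d < r$ and $d \mid r$; then the number of blocks is $r/d \le r/2$. Let $\sigma \in G$ be the given $q$-cycle, with $q$ a prime and $q > r/2$. The induced action of $\sigma$ on the set of blocks is a permutation whose order divides $q$, so, $q$ being prime, this induced order is either $1$ or $q$. If it were $q$, then the induced permutation would contain at least one $q$-cycle on the blocks, forcing $r/d \ge q > r/2$ and hence $d < 2$, a contradiction. Therefore $\sigma$ fixes every block setwise, which means every orbit of $\sigma$ on the roots is contained in a single block; but the $q$-cycle contributes an orbit of size $q$, giving $q \le d \le r/2$, again contradicting $q > r/2$. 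We conclude that $G$ is primitive.

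Finally, I would appeal to Jordan's classical theorems on primitive permutation groups (compiled, for instance, as Theorem~3.3A of \cite{DM}): a primitive subgroup of $S_{r}$ containing a transposition must equal $S_{r}$, while a primitive subgroup of $S_{r}$ containing a $3$-cycle must contain the alternating group $A_{r}$. Applied to our $G$, these two statements yield the two conclusions of the lemma. There is no serious computational obstacle in this argument; the only care needed is in the block-system analysis above, where the hypothesis $q > r/2$ combined with the primality of $q$ is used twice to force a contradiction.
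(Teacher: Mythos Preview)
Your argument is correct. Transitivity follows from irreducibility, and your block-system analysis is clean: a proper divisor $d$ of $r$ satisfies $d \le r/2$, so both branches (the induced action of the $q$-cycle on blocks having order $q$, or order $1$) lead to a contradiction with $q > r/2$. Jordan's theorem (Theorem~3.3A of \cite{DM}) then finishes the job.

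As for comparison with the paper: the paper does not actually prove this lemma. It is stated with a citation to \cite{KC} and \cite{DM} (Theorem~3.3A) and then used as a black box. What you have written is essentially the standard proof one finds in those references---reduce to primitivity via the prime cycle of length exceeding $r/2$, then quote Jordan. So your proposal supplies the details the paper omits, and there is no methodological divergence to discuss.
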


Note that in general 
if the Galois group of an $f(x) \in \mathbb Z[x]$ over $\mathbb Q$ is contained in an alternating group, then its 
discriminant is a square.  
Thus, in the statement of Lemma~\ref{kconradlemma}, one can conclude that the Galois group is the
symmetric group by showing that the discriminant $\Delta$ of $f(x)$ is not a square.  
In the next section, we give an explicit formula for the discriminant $\Delta$ of our polynomials in
Theorem~\ref{mainthm} and show that $\Delta$ is not a square for fixed $r$
and for $t$ sufficiently large.  In the last section, for $r \ge 8$, 
we show the existence of primes $q$ and $p$ as in Lemma~\ref{hajirlemma}.  For $r \le 7$, we appeal to 
Lemma~\ref{kconradlemma} to finish off the proof of Theorem~\ref{mainthm}.

\section{The Discriminant}
\begin{lemma}\label{discrimvaluelemma}
Let $t$ and $r$ be integers with $t \ge 0$ and $r \ge 2$.  
Let $\Delta$ be the common discriminant of $p_{r,t}(x)$, $\tilde{p}_{r,t}(x)$, $\tilde{p}_{r,t}(x+1)$ and $q_{r,t+r+1}(x-1)$.  Then
\[
\Delta = (-1)^{r(r-1)/2} \dfrac{(t+1)^{r-1} (t+r+1)^{r-1} (t+2)^{r-2} (t+3)^{r-2} \cdots (t+r)^{r-2}}{(r!)^{r-2}}.
\]
\end{lemma}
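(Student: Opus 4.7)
\textbf{Proof plan for Lemma~\ref{discrimvaluelemma}.}
Since the discriminant is invariant under coefficient reversal $f(x)\mapsto x^{r}f(1/x)$ and under the translations $x\mapsto x\pm 1$, it suffices to compute the discriminant of $p_{r,t}(x)$. The plan is to use the formula
\[
\Delta(p_{r,t}) \;=\; \frac{(-1)^{r(r-1)/2}}{a_{r}}\prod_{\alpha}p_{r,t}'(\alpha),
\]
where $a_{r}=\binom{t+r}{r}$ is the leading coefficient and $\alpha$ ranges over the roots of $p_{r,t}$, and to evaluate the right-hand side cleanly via two identities relating $p_{r,t}$ to $p_{r-1,t+1}$.

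The first identity is the derivative formula
\[
p_{r,t}'(x) \;=\; (t+1)\,p_{r-1,t+1}(x),
\]
which I would prove by checking coefficients using $j\binom{t+j}{j}=(t+1)\binom{t+j}{j-1}$. The second, and more substantive, is
\[
p_{r,t}(x) + (x-1)\,p_{r-1,t+1}(x) \;=\; \binom{t+r+1}{r}x^{r}.
\]
To establish it I would write $p_{r,t}(x)=F^{(t)}(x)/t!$ for $F(x)=(x^{t+r+1}-1)/(x-1)$, apply the Leibniz rule to $(x-1)F(x)=x^{t+r+1}-1$ (differentiated $t+1$ times), and recognize $F^{(t)}/t!$ and $F^{(t+1)}/(t+1)!$ as $p_{r,t}$ and $p_{r-1,t+1}$, respectively. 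Combining the two identities and evaluating at any root $\alpha$ of $p_{r,t}$ gives
\[
p_{r,t}'(\alpha) \;=\; (t+1)\binom{t+r+1}{r}\,\frac{\alpha^{r}}{\alpha-1}.
\]

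Taking the product over the $r$ roots, I would use the Vi\`ete relations $\prod_{\alpha}\alpha = (-1)^{r}/\binom{t+r}{r}$ (since $p_{r,t}(0)=1$), $\prod_{\alpha}(\alpha-1) = (-1)^{r}p_{r,t}(1)/\binom{t+r}{r}$, and the hockey-stick identity $p_{r,t}(1)=\sum_{j=0}^{r}\binom{t+j}{j}=\binom{t+r+1}{r}$. After the cancellations this yields
\[
\Delta(p_{r,t}) \;=\; \frac{(-1)^{r(r-1)/2}\,(t+1)^{r}\,\binom{t+r+1}{r}^{r-1}}{\binom{t+r}{r}}.
\]
Finally, expanding $\binom{t+r+1}{r}=(t+2)(t+3)\cdots(t+r+1)/r!$ and $\binom{t+r}{r}=(t+1)(t+2)\cdots(t+r)/r!$ and collecting exponents of each linear factor $t+i$ produces exactly the formula in the statement.

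The main obstacle is spotting the identity $p_{r,t}(x)+(x-1)p_{r-1,t+1}(x)=\binom{t+r+1}{r}x^{r}$; once this and the companion derivative relation are in hand, the remaining work is routine manipulation of products over the roots and of binomial coefficients. The computation also incidentally confirms that $\Delta\neq 0$ (so in particular $p_{r,t}$ has distinct roots), since the factors $t+1,\ldots,t+r+1$ are all positive.
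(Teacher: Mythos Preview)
Your approach is correct and genuinely different from the paper's, and in fact more direct. One slip to fix: the discriminant formula you quote at the outset is not right for a non-monic polynomial. With the paper's convention $\Delta=(-1)^{r(r-1)/2}a_r^{-1}\,\mathrm{Res}(f,f')$ and $\mathrm{Res}(f,f')=a_r^{\,r-1}\prod_\alpha f'(\alpha)$, one has
\[
\Delta(p_{r,t})=(-1)^{r(r-1)/2}\,a_r^{\,r-2}\prod_{\alpha}p_{r,t}'(\alpha),\qquad a_r=\binom{t+r}{r},
\]
not $(-1)^{r(r-1)/2}a_r^{-1}\prod_\alpha p_{r,t}'(\alpha)$. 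With this correction your computation of $\prod_\alpha p_{r,t}'(\alpha)=(t+1)^r\binom{t+r+1}{r}^{r-1}\big/\binom{t+r}{r}^{r-1}$ gives exactly the intermediate expression you wrote, $\Delta=(-1)^{r(r-1)/2}(t+1)^r\binom{t+r+1}{r}^{r-1}/\binom{t+r}{r}$, and hence the lemma; so your end result is right even though the stated formula would not have produced it. Your two identities $p_{r,t}'=(t+1)p_{r-1,t+1}$ and $p_{r,t}+(x-1)p_{r-1,t+1}=\binom{t+r+1}{r}x^r$ are correct, and the Leibniz argument for the second is a nice touch (a direct Pascal-identity check on coefficients also works).

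By contrast, the paper takes an indirect route: it expands $\mathrm{Res}(f,f')$ as a Sylvester determinant for $f=q_{r,t+r+1}(x-1)$ and extracts the factors $(t+1)^{r-1}(t+r+1)^{r-1}\prod_{i=2}^{r}(t+i)^{r-2}$ by column-divisibility; it then switches to $p_{r,t}$, shows the resultant has degree exactly $r^2$ in $t$, and pins down the leading coefficient by recognising it as $\mathrm{Res}(g,g')$ for the truncated exponential $g(x)=\sum_{j=0}^{r}x^j/j!$, invoking Schur's classical discriminant formula for $L_r^{(-r-1)}$. Your argument avoids that external input entirely and yields the closed form in one pass; the paper's argument, on the other hand, makes the provenance of each linear factor $(t+i)$ visible in the Sylvester matrix and ties the constant $(r!)^{2-r}$ to a known object.
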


\begin{proof}
We view $t$ as a variable and work with 
\[
f_{r}(x) = q_{r,t+r+1}(x-1) = \sum_{j=0}^{r} c_{j} x^{j},
\]
where
\begin{equation}\label{discreq1}
c_{j} = \dfrac{(-1)^{r-j} (t+r+1) \cdots (t+r-j+2) (t+r-j) \cdots (t+1)}{j! (r-j)!}.
\end{equation}
To clarify, for $t$ a non-negative integer, we have
\[
c_{j} = \dfrac{(-1)^{r-j} (t+r+1)!}{j! \,(r-j)! \,t! \,(t+r-j+1)}.
\]
However, from the point of view of \eqref{discreq1}, we can view $t$ as a real variable.  

We are interested in the discriminant $\Delta$ of $f_{r}(x)$.  Observe that
\begin{equation}\label{discreq2}
\Delta = \dfrac{(-1)^{r(r-1)/2}}{c_{r}} \,\text{Res}(f_{r}, f'_{r})
= \dfrac{(-1)^{r(r-1)/2} r!}{(t+r+1)(t+r) \cdots (t+3) (t+2)} \,\text{Res}(f_{r}, f'_{r}),
\end{equation}
where $\text{Res}(f_{r}, f'_{r})$ is the resultant of $f_{r}$ and $f'_{r}$ with respect 
to the variable $x$.  We express the resultant in terms of the $(2r-1) \times (2r-1)$ Sylvester determinant
\begin{equation*} 
\text{Res}(f_{r}, f'_{r}) = 
\begin{vmatrix} 
c_{r} & c_{r-1} & c_{r-2} & \hdots & c_{1} & c_{0} & 0 & 0 & \hdots & 0 \\[3pt] 
0 & c_{r} & c_{r-1} & \hdots & c_{2} & c_{1} & c_{0} & 0 & \hdots & 0 \\[3pt] 
0 & 0 & c_{r} & \hdots & c_{3} & c_{2} & c_{1} & c_{0} &  \hdots & 0 \\[3pt] 
\vdots & \vdots & \vdots & \ddots & \vdots & \vdots & \vdots & \vdots & \ddots &\vdots \\[3pt] 
r c_{r} & (r-1) c_{r-1} & (r-2) c_{r-2} & \hdots & c_{1} & 0 & 0 & 0 & \hdots & 0 \\[3pt] 
0 & r c_{r} & (r-1) c_{r-1} & \hdots & 2 c_{2} & c_{1} & 0 & 0 & \hdots & 0 \\[3pt] 
0 & 0 & r c_{r} & \hdots & 3 c_{3} & 2 c_{2} & c_{1} & 0 & \hdots & 0 \\[3pt] 
\vdots & \vdots & \vdots & \ddots & \vdots & \vdots & \vdots & \vdots & \ddots &\vdots  
\end{vmatrix}.
\end{equation*}
Observe that there are $r-1$ rows consisting of the coefficients of $f_{r}(x)$ and 
$r$ rows consisting of the coefficients of $f'_{r}(x)$.   
For each integer $j \in [1, r]$, we see from \eqref{discreq1} that $t+r+1$ divides $c_{j}$.  
We deduce that $t+r+1$ can be factored out of each element of the first $r$ columns of 
$\text{Res}(f_{r}, f'_{r})$ to show that $(t+r+1)^{r}$ is a factor of $\text{Res}(f_{r}, f'_{r})$.  
For each $k \in \{ 1, 2, \ldots, r \}$, we also have from \eqref{discreq1} that $t+r-k+1$
divides $c_{j}$ for each integer $j \in [1,r]$ with $j \ne k$.  In particular, each of the
$r-1$ columns not containing $k c_{k}$ have each element divisible by $t+r-k+1$.  
Thus, $(t+r-k+1)^{r-1}$ divides $\text{Res}(f_{r}, f'_{r})$.  Hence,
\begin{equation}\label{discreq3}
(t+r+1)^{r} (t+1)^{r-1} (t+2)^{r-1} \cdots (t+r)^{r-1}
\end{equation}
divides $\text{Res}(f_{r}, f'_{r})$.  The product in \eqref{discreq3} as a polynomial in $t$ 
has degree $(r+1)(r-1) + 1 = r^{2}$.  Hence, from \eqref{discreq2}, we see that $\Delta$
is divisible by the polynomial
\begin{equation}\label{discreq4}
(t+r+1)^{r-1} (t+1)^{r-1} (t+2)^{r-2} (t+3)^{r-2} \cdots (t+r)^{r-2}
\end{equation}
of degree $r^{2} - r$ in $t$.  

We turn to making use of
\[
p_{r,t}(x) = \sum_{j=0}^{r} d_{j} \,x^{j},
\quad \text{ where }
d_{j} = \sum_{j=0}^{r} \dfrac{(t+j) (t+j-1) \cdots (t+1)}{j!} x^{j}.
\]
Observe that the discriminant of $f_{r}(x)$ and $p_{r,t}(x)$ are both polynomials in $t$ 
that agree at all positive integers and, hence, are identical.  
We use next that $\Delta$ is the discriminant of $p_{r,t}(x)$ to show that 
$\Delta$ cannot be divisible by a higher degree polynomial in $t$ than that
given by \eqref{discreq4}.  Taking into account the leading coefficient of $p_{r,t}(x)$, we see that
\begin{equation}\label{discreq6}
\Delta = \dfrac{(-1)^{r(r-1)/2} r!}{(t+r)(t+r-1) \cdots (t+2) (t+1)} \,\text{Res}(p_{r,t}, p'_{r,t}),
\end{equation}
where
\begin{equation*} 
\text{Res}(p_{r,t}, p'_{r,t}) = 
\begin{vmatrix} 
d_{r} & d_{r-1} & d_{r-2} & \hdots & d_{1} & d_{0} & 0 & 0 & \hdots & 0 \\[3pt] 
0 & d_{r} & d_{r-1} & \hdots & d_{2} & d_{1} & d_{0} & 0 & \hdots & 0 \\[3pt] 
0 & 0 & d_{r} & \hdots & d_{3} & d_{2} & d_{1} & d_{0} &  \hdots & 0 \\[3pt] 
\vdots & \vdots & \vdots & \ddots & \vdots & \vdots & \vdots & \vdots & \ddots &\vdots \\[3pt] 
r d_{r} & (r-1) d_{r-1} & (r-2) d_{r-2} & \hdots & d_{1} & 0 & 0 & 0 & \hdots & 0 \\[3pt] 
0 & r d_{r} & (r-1) d_{r-1} & \hdots & 2 d_{2} & d_{1} & 0 & 0 & \hdots & 0 \\[3pt] 
0 & 0 & r d_{r} & \hdots & 3 d_{3} & 2 d_{2} & d_{1} & 0 & \hdots & 0 \\[3pt] 
\vdots & \vdots & \vdots & \ddots & \vdots & \vdots & \vdots & \vdots & \ddots &\vdots  
\end{vmatrix}.
\end{equation*}
Observe that $d_{j}$ is a polynomial of degree $j$ in $t$ for each $j \in \{ 0, 1, \ldots, r \}$.  
We set $A = (a_{ij})$ to be the $(2r-1) \times (2r-1)$ matrix defining $\text{Res}(p_{r,t}, p'_{r,t})$ above, so
\[
a_{ij} = 
\begin{cases}
d_{r + i - j} &\text{if } 1 \le i \le r-1 \text{ and } i \le j \le i+r \\[3pt]
(i - j + 1) \,d_{i - j + 1} &\text{if } r \le i \le 2r-1 \text{ and } i-r+1 \le j \le i \\[3pt]
0 &\text{otherwise.}
\end{cases}
\]
We make use of the definition of a determinant to obtain
\begin{equation*} 
\text{Res}(p_{r,t}, p'_{r,t})  
= \det A 
= \sum_{\sigma \in S_{2r-1}} \bigg(  \text{sgn}(\sigma) \prod_{i = 1}^{2r-1} a_{i,\sigma(i)}  \bigg).
\end{equation*}
We show that independent of $\sigma \in S_{2r-1}$, the product $\prod_{i = 1}^{2r-1} a_{i,\sigma(i)}$ 
is a polynomial of degree at most $r^{2}$ in $t$.  
In fact, more is true.  If each $a_{i,\sigma(i)} \ne 0$, then we show that $\prod_{i = 1}^{2r-1} a_{i,\sigma(i)}$ 
is a polynomial of degree exactly $r^{2}$ in $t$.  Indeed,
for such $\sigma$, we have
\begin{gather*}
i \le \sigma(i) \le i + r \quad \text{ for } 1 \le i \le r-1, \\[3pt]
i - r + 1 \le \sigma(i) \le i \quad \text{ for } r \le i \le 2r-1,
\end{gather*}
and
\begin{align*}
\deg \bigg(  \prod_{i = 1}^{2r-1} a_{i,\sigma(i)}  \bigg) 
&= \sum_{i = 1}^{r-1} \deg( a_{i,\sigma(i)} ) + \sum_{i = r}^{2r-1} \deg( a_{i,\sigma(i)} ) \\[5pt]
&= \sum_{i = 1}^{r-1} \big(r+i - \sigma(i)\big) + \sum_{i = r}^{2r-1} \big(1+i - \sigma(i)\big) \\[5pt]
&= r^{2} + \sum_{i = 1}^{2r-1} \big(i - \sigma(i)\big) = r^{2}.
\end{align*}

We set
\[
\rho = \sum_{\sigma \in S_{2r-1}} \bigg(  \text{sgn}(\sigma) \prod_{i = 1}^{2r-1} \ell(a_{i,\sigma(i)})  \bigg),
\]
where $\ell(a_{i,\sigma(i)})$ denotes the leading coefficient of $a_{i,\sigma(i)}$.  Observe that if $\rho \ne 0$, 
then $\det A$ is a polynomial of degree $r^{2}$ with leading coefficient $\rho$.  The value of $\rho$ is the
determinant of $(2r-1) \times (2r-1)$ matrix $(\ell(a_{ij}))$.  Since
\[
\ell(a_{ij}) = 
\begin{cases}
1/(r+i-j)!  &\text{for } 1 \le i \le r-1 \text{ and } i \le j \le i+r \\[3pt]
1/(i-j)!  &\text{for } r \le i \le 2r-1 \text{ and } i-r+1 \le j \le i \\[3pt]
0 &\text{otherwise},
\end{cases}
\]
this determinant is the value of $\text{Res}(g,g')$, where 
$g(x) = \sum_{j=0}^{r} x^{j}/j!$. 
This polynomial truncation of $e^{x}$ has been studied by 
R~F.~Coleman \cite{RFC} and I.~Schur \cite{IS, IS2}.   
In particular, $g(x)$ corresponds to the generalized Laguerre polynomial $L_{r}^{(-r-1)}(x)$,
for which I.~Schur \cite{IS3} gives an explicit formula for the discriminant from which the
value of $\text{Res}(g,g')$ is easily determined (also, see \cite{RM}, Chapter~9).  From these,
we see that
\[
\text{Res}(g,g') = \dfrac{1}{(r!)^{r-1}}.
\]
We deduce that $\text{Res}(p_{r,t}, p'_{r,t})$ is a polynomial of degree $r^{2}$
in $t$ with leading coefficient $1/(r!)^{r-1}$.  From \eqref{discreq6}, we see that $\Delta$ is a polynomial 
of degree $r^{2} - r$ in $t$ which has leading coefficient $(-1)^{r(r-1)/2} /(r!)^{r-2}$.  Since \eqref{discreq4} divides $\Delta$,
the lemma follows.
\end{proof}

\begin{lemma}\label{discrimnonsquarelemma}
Let $r$ be an integer $\ge 2$.  For $t$ a non-negative integer,   
let $\Delta$ be the common discriminant of $p_{r,t}(x)$, $\tilde{p}_{r,t}(x)$, $\tilde{p}_{r,t}(x+1)$ and $q_{r,t+r+1}(x-1)$.  
Then there is a $t_{0} = t_{0}(r)$ such that for all $t \ge t_{0}$, the value of $\Delta$ is not a square.
\end{lemma}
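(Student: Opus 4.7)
The plan is to combine the explicit formula from Lemma~\ref{discrimvaluelemma} with a case analysis according to $r \pmod 4$. For $t \ge 0$, every linear factor $t+k$ appearing in the formula is positive, so the sign of $\Delta$ equals $(-1)^{r(r-1)/2}$. This sign is $-1$ precisely when $r \equiv 2, 3 \pmod 4$, and in those two residue classes $\Delta$ is a negative rational for all sufficiently large $t$ and hence cannot be the square of a rational. This disposes of half of the possible residue classes immediately.

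For $r \equiv 0 \pmod 4$, the exponent $r-1$ is odd while $r-2$ is even, so each middle factor $(t+k)^{r-2}$ and the denominator $(r!)^{r-2}$ is already a perfect square. Hence $\Delta$ is a rational square if and only if the integer $(t+1)(t+r+1)$ is a perfect square. Writing $m = r/2$, this condition becomes $(t+m+1)^2 - m^2 = y^2$, which factors as $(t+m+1-y)(t+m+1+y) = m^2$. Since $m$ is fixed, $m^2$ has only finitely many factorizations, so only finitely many $t$ can produce a square and the claim follows.

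The remaining case, $r \equiv 1 \pmod 4$ with $r \ge 5$, is the main obstacle. Now $r-1$ is even (so the outer factors $(t+1)^{r-1}$ and $(t+r+1)^{r-1}$ are squares) while $r-2$ is odd, so pulling squares out of the middle factors and of $(r!)^{r-2}$ reduces the question to whether $r! \cdot (t+2)(t+3)\cdots(t+r)$ is an integer square. Using the identity
\[
r! \cdot (t+2)(t+3)\cdots(t+r) = r \cdot ((r-1)!)^2 \cdot \binom{t+r}{r-1},
\]
the question further reduces to showing that $f(t) := r\binom{t+r}{r-1}$ is not a perfect square for $t$ sufficiently large. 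The polynomial $f(t)$ has degree $r-1 \ge 4$ with $r-1$ distinct simple roots $-2, -3, \ldots, -r$, so the affine curve $y^2 = f(t)$ has positive genus and, by Siegel's theorem on integral points, has only finitely many integer solutions $(t, y)$. An elementary parity argument based on a single prime does not appear to suffice in this case, and this is precisely where the Diophantine input of Siegel's theorem becomes essential; it is the one step that forces us beyond the elementary factorization manipulations available in the other residue classes.
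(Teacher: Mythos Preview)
Your argument is correct, and for $r\equiv 2,3\pmod 4$ (sign) and $r\equiv 0\pmod 4$ (reduce to $(t+1)(t+r+1)$ being a square) it matches the paper's treatment, with your difference-of-squares factoring $(t+m+1-y)(t+m+1+y)=m^{2}$ replacing the paper's gcd argument to bound $t$.

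The genuine divergence is in the case $r\equiv 1\pmod 4$.  You pass directly to the single equation $y^{2}=r!\,(t+2)(t+3)\cdots(t+r)$ (equivalently $y^{2}=r\binom{t+r}{r-1}$ up to a square factor), a hyperelliptic curve of genus $(r-3)/2\ge 1$, and invoke Siegel's theorem once.  The paper instead uses only three consecutive factors: since for each prime $p>r$ dividing $(t+2)\cdots(t+r)$ the exponent must be even, one writes $t+2=au^{2}$, $t+3=bv^{2}$, $t+4=cw^{2}$ with $a,b,c$ dividing $\prod_{p\le r}p$, and the identity $(t+3)^{2}-1=(t+2)(t+4)$ yields finitely many quartic equations $b^{2}v^{4}-1=ac\,(uw)^{2}$.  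Siegel's theorem is then applied to each of these genus-$1$ curves.  Your route is conceptually cleaner (one curve, no auxiliary smooth parameters), while the paper's route confines the Diophantine input to elliptic curves, where Siegel's theorem is most classical and where effective bounds via Baker's method are most readily available.  In either approach only the existence of $t_{0}(r)$ is obtained, not an explicit value.
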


\begin{proof}
Suppose $r \ge 2$ and $t \ge 0$ are such that $\Delta$ is a square.  
From Lemma~\ref{discrimvaluelemma}, we see that $r \ge 4$ since $\Delta < 0$ for $r \in \{ 2, 3 \}$.  
We consider even and odd $r$ separately and only the case that $\Delta \ge 0$ since $\Delta < 0$ 
cannot be a square.

In the case that $r$ is even, Lemma~\ref{discrimvaluelemma} implies that
$(t+1)(t+r+1)$ is an integer that is a rational square.  
Hence, $(t+1)(t+r+1)$ is the square of an integer.  Let $\delta = \gcd(t+1,t+r+1)$.  
Then $\delta$ divides the difference $(t+r+1) - (t+1) = r$, so $\delta \le r$.  
Also $(t+1)/\delta$ and $(t+r+1)/\delta$ are relatively prime numbers whose 
product is a square, so each of them is a square. 
As $(t+r+1)/\delta - (t+1)/\delta = r/\delta \le r$ and the difference of 
two consecutive squares $(n+1)^{2}-n^{2} = 2n+1$ tends to infinity with $n$, 
we deduce that $(t+1)/\delta$ is bounded.  In fact, taking $n^{2} = (t+1)/\delta$, 
we see that
\begin{align*}
2 \sqrt{\dfrac{t+1}{r}} + 1 \le 2 \sqrt{\dfrac{t+1}{\delta}} + 1 \le \dfrac{r}{\delta} \le r
&\implies 
\dfrac{4(t+1)}{r} \le (r-1)^{2} \\[5pt]
&\implies 
t < t+1 \le \dfrac{r \,(r-1)^{2}}{4}.
\end{align*}
Thus, for $r$ even and $t \ge r (r-1)^{2}/4$, we have that $\Delta$ is not a square.

In the case that $r$ is odd, 
Lemma~\ref{discrimvaluelemma} implies that 
the largest factor of the product 
\[
(t+2)(t+3) \cdots (t+r)
\]
relatively prime to $r!$ is a square.   
As $(t+r) - (t+2) = r-2$, we see also that for every prime $p > r$
dividing the product $(t+2)(t+3) \cdots (t+r)$, there is a unique $j \in \{ 2, 3, \ldots, r \}$
for which $p|(t+j)$.  Thus, for such a $p$ and $j$, there is a positive integer $e$
for which $p^{2e} \Vert (t+j)$.  As $r \ge 5$, we deduce that there are positive integers 
$a$, $b$ and $c$ each dividing the product of the primes up to $r$ and satisfying
\[
t+2 = a u^{2}, \quad
t+3 = b v^{2} \quad \text{and} \quad
t+4 = c w^{2}, 
\]
for some positive integers $u$, $v$ and $w$.  We deduce that
\[
b^{2} v^{4} - 1 = (t+3)^{2} -1 = (t+2)(t+4) = ac (uw)^{2}.
\]
As $a$, $b$ and $c$ divide the product of the primes up to $r$, 
there are finitely many equations of the form $ac y^{2} = b^{2} x^{4} - 1$ possible
for a given $r$.  Each such equation is an elliptic curve containing finitely many 
integral points by a theorem of Siegel \cite{LM, ST, LW}.  Hence, for a fixed $r$, 
the value of $x = v = \sqrt{(t+3)/b}$ is bounded from above for every possible $b$.  
Thus, $t_{0}$ exists in
the case of $r$ odd, completing the proof.
\end{proof}


\section{Proof of Theorem~\ref{mainthm}}
As noted in the introduction, from \cite{BFLT} and \cite{FKP}, 
for $t$ sufficiently large, the polynomial
$p_{r,t}(x)$ and, hence, the polynomials $\tilde{p}_{r,t}(x)$, $\tilde{p}_{r,t}(x+1)$ and $q_{r,t+r+1}(x-1)$ are irreducible.  

We begin by considering the case that $r$ is a fixed integer $\ge 8$.  
From Lemma~\ref{hajirlemma} and Lemma~\ref{discrimnonsquarelemma}, 
it suffices to show that there is a prime $q$ in the interval $(r/2,r-2)$
such that the Newton polygon of one of 
$p_{r,t}(x)$, $\tilde{p}_{r,t}(x)$, $\tilde{p}_{r,t}(x+1)$ and $q_{r,t+r+1}(x-1)$ 
with respect to some prime $p$ has an edge with slope $a/b$ where 
$a$ and $b$ are relatively prime integers and $q|b$.  

Since $r \ge 8$, one can show using explicit results on the distribution of primes (cf.~\cite{RS})
that there is a prime $q \in (r/2,r-2)$.  Alternatively, from \cite{SR}, one has that there are
$3$ primes in the interval $(r/2,r]$ for $r \ge 17$ so that there must be at least $1$ prime
in the interval $(r/2,r-2)$ for $r \ge 17$.  Then a simple check leads to such a prime
for $r \ge 8$.

With $q$ a prime in $(r/2,r-2)$, we consider $t \in \mathbb Z^{+}$ sufficiently large. 
Note that the numbers $t+r+1-q$ and $t+1+q$ are distinct positive integers.  
Let $p$ be a prime $> r$, and suppose $p^{e} \Vert (t+r+1-q)(t+1+q)$ where $e \in \mathbb Z^{+}$.  
Observe that $p > r$ implies either $p^{e} \Vert (t+r+1-q)$ or $p^{e} \Vert (t+1+q)$.  
We use that in fact $p$ can divide at most one of $t+r+1, t+r, \ldots, t+1$.  
Suppose $p^{e} \Vert (t+r+1-q)$.  
Then the Newton polygon of $q_{r,t+r+1}(x-1)$ with respect to $p$  
consists of two edges, one joining $(0,e)$ to $(r-q,0)$ and
the other joining $(r-q,0)$ to $(r,e)$.  
In the case that $p^{e} \Vert (t+1+q)$, the Newton polygon of $q_{r,t+r+1}(x-1)$ with respect to $p$  
also consists of two edges, one joining $(0,e)$ to $(q,0)$ and
the other joining $(q,0)$ to $(r,e)$.
In either case, we see that the Newton polygon of $q_{r,t+r+1}(x-1)$ with respect to $p$  
has an edge of slope $\pm e/q$.  From Lemma~\ref{hajirlemma}, we can therefore
deduce for sufficiently large $t$, the Galois group of $q_{r,t+r+1}(x-1)$ is $S_{r}$
unless $q \mid e$.  This is true for each prime $p > r$ with $p|(t+r+1-q)(t+1+q)$.  

So suppose then that for every prime $p > r$ dividing $(t+r+1-q)(t+1+q)$, we have 
$p^{e} \Vert (t+r+1-q)$ or $p^{e} \Vert (t+1+q)$ for some $e$ divisible by $q$.   
We deduce that we can write
\[
t+1+q = a u^{q}
\quad \text{and} \quad
t+r+1-q = b v^{q},
\]
where $a$, $b$, $u$ and $v$ are positive integers with both $a$ and $b$ dividing
\[
\mathcal P = \prod_{\substack{p \le r \\ p \text{ prime}}} p^{q-1}.
\]
Note that $q \in (r/2,r-2)$ and $r \ge 8$, so that $q \ge 5$.  
For fixed $a$ and $b$ dividing $\mathcal P$, we have $u$ and $v$ must be solutions
to the Diophantine equation
\[
a u^{q} - b v^{q} = 2q - r > 0.
\]
As this is a Thue equation, we deduce that there are finitely many integral solutions in
$u$ and $v$ (cf.~\cite{ShT}).  This is true for each fixed $a$ and $b$ dividing $\mathcal P$.  
As $\mathcal P$ and $q$ only depend on $r$ and $r$ is fixed, we deduce that there are finitely
many possibilities for $t+1+q = a u^{q}$.  Hence, for sufficiently large $t$, we deduce that
$q \nmid e$ for some prime $p > r$ with $p^{e} \Vert (t+r+1-q)$ or $p^{e} \Vert (t+1+q)$.  
Consequently, in the case that $r \ge 8$, we can conclude the Galois group of 
$q_{r,t+r+1}(x-1)$ is $S_{r}$, from which the same follows for the polynomials 
$p_{r,t}(x)$, $\tilde{p}_{r,t}(x)$ and $\tilde{p}_{r,t}(x+1)$. 

Now, we consider the case that $r \le 7$.  
We consider $t$ sufficiently large so that in particular the polynomials in Theorem~\ref{mainthm} are irreducible.  
In the case that $r = 2$ , the only possibility then is that the Galois group is $S_{2}$.  
For $r=3$, we use also that, by Lemma~\ref{discrimnonsquarelemma}, 
the discriminant of $p_{3,t}(x)$ is not a square, and this is enough to imply that 
the Galois group of $p_{3,t}(x)$ over $\mathbb Q$ is $S_{3}$.  
For $r = 4$, suppose $p$ is a prime $> 3$ dividing $(t+2)(t+4)$.  If $p^{e}\Vert (t+2)(t+4)$, then
$p^{e}\Vert (t+2)$ or $p^{e} \Vert (t+4)$.  In either case, we see that the Newton polygon of 
$q_{r,t+r+1}(x-1)$ with respect to $p$ consists of an edge with slope $e/3$.  
If $3 \nmid e$, then the Galois group will have a $3$-cycle so that Lemma~\ref{kconradlemma}
and Lemma~\ref{discrimnonsquarelemma} imply that the Galois group is $S_{4}$.  
Otherwise, $3 \mid e$ for each prime $p > 3$ dividing
$(t+2)(t+4)$.  We deduce that $t+4 = a u^{3}$ and $t+2 = b v^{3}$ where $a$ and $b$ divide $36$.  
Observe that $a u^{3} - b v^{3} = 2$.  
This is a Thue equation, and as before this equation has no solutions for $t$ sufficiently large.  Thus,
since $t$ is sufficiently large, the Galois group of $p_{4,t}(x)$ over $\mathbb Q$ is $S_{4}$.

For $r = 5$ and $r = 7$, one can give similar arguments.  Specifically, for $r = 5$ and 
for a prime $p > 3$ such that $p^{e}\Vert (t+3)(t+4)$, we deduce that
either $3 \mid e$ or else there is a $\sigma$ in the Galois group of $p_{5,t}(x)$ over $\mathbb Q$
which is a $3$-cycle or is a product of two disjoint cycles, one a $3$-cycle and one a $2$-cycle.  
In the case that $3 \mid e$ for every such prime $p > 3$, 
we have $t+4 = a u^{3}$, $t+3 = b v^{3}$ and $a u^{3} - b v^{3} = 1$, 
where $a$ and $b$ divide $36$.  Since $t$ is sufficiently large, this does not occur.  
In the case that $\sigma$ is a product of a $3$-cycle and a $2$-cycle, we see that
$\sigma^{2}$ is a $3$-cycle.  Thus, regardless of $\sigma$, we can apply Lemma~\ref{kconradlemma}
and Lemma~\ref{discrimnonsquarelemma} to deduce that the Galois group of $p_{5,t}(x)$ over $\mathbb Q$ is $S_{5}$.  
For $r = 7$ and for a prime $p > 3$ such that $p^{e}\Vert (t+4)(t+5)$, one similarly argues
that either $3 \mid e$ for every prime $p > 3$ and a Thue equation shows that this impossible
since $t$ is sufficiently large or there is a $\sigma$ in the Galois group of $p_{7,t}(x)$ over $\mathbb Q$
such that $\sigma^{4}$ is a $3$-cycle.  Also, for $r = 7$ and for a prime 
$p > 7$ such that $p^{e}\Vert (t+1)(t+8)$, one similarly argues
that either $7 \mid e$ for every prime $p > 7$ and a Thue equation shows that this impossible
since $t$ is sufficiently large or there is a $7$-cycle in the Galois group of $p_{7,t}(x)$ over $\mathbb Q$. 
Thus, the Galois group of $p_{7,t}(x)$ over $\mathbb Q$ contains a $3$-cycle and a $7$-cycle, 
and Lemma~\ref{kconradlemma} and Lemma~\ref{discrimnonsquarelemma} imply this Galois group is $S_{7}$.

We are left with the case that $r = 6$.  
There are $16$ transitive subgroups of $S_{6}$ (cf.~\cite{DM}).  
We can eliminate all but two of these as possibilities for 
the Galois group $G$ of $p_{6,t}(x)$ over $\mathbb Q$ as follows.
Using an argument similar to the above, we consider a prime $p > 5$
such that $p^{e}\Vert (t+2)(t+6)$ to show that 
either $5 \mid e$ for every prime $p > 5$ and a Thue equation shows that this impossible
since $t$ is sufficiently large or there is a $5$-cycle in $G$. 
Since $t$ is sufficiently large, we deduce that $p_{6,t}(x)$ is irreducible over $\mathbb Q$, 
$p_{6,t}(x)$ has a non-square discriminant in $\mathbb Q$, and $G$ contains a $5$-cycle. 
The latter implies that $5$ divides $|G|$.   
Of the $16$ transitive subgroups of $S_{6}$, only $4$ have size divisible by $5$, and of those
exactly $2$ are contained in $A_{5}$.  Since the discriminant of $p_{6,t}(x)$ is not a square,
this leaves then just $2$ possibilities for $G$, 
one is $S_{6}$ and the other is $PGL_{2}(5)$, which is a subgroup of $S_{6}$ that is 
isomorphic to $S_{5}$.  

For the purposes of the proof of Theorem~\ref{mainthm}, we can distinguish between 
cases where $G = S_{6}$ and cases where $G = PGL_{2}(5)$ by observing that 
$S_{6}$ has an element which is the product of two disjoint cycles, one a $2$-cycle 
and the other a $4$-cycle, whereas $PGL_{2}(5)$ has no such element.
We consider a prime $p > 3$ such that $p^{e}\Vert (t+3)(t+5)$ for some $e \in \mathbb Z^{+}$.    

If $2 \nmid e$ then ${p}_{6,t}(x)=g(x)h(x)$ where $g(x)$ and $h(x)$ are irreducible polynomials over $\Q_p$ of degrees $2$ and $4$ respectively. Let $F_g$ and $F_h$ denote the splitting fields of $g$ and $h$ over $\Q_p$ and observe that they are tamely ramified since $p>3$. Using Newton polygons, one deduces that $F_g$ is totally ramified and that the ramification index of $F_h$ is divisible by $4$. We know that $F_h$ is tamely ramified and therefore the tame inertia group is cyclic with order divisible by $4$ \cite[Corollary 1, p. 31]{CF}. Since $S_4$ has no larger cyclic subgroups, we deduce that the ramification index of $F_h$ is exactly $4$ and the tame inertia subgroup is generated by a $4$-cycle (the only possible form of an element with order $4$ in $S_4$).

Now, let $K$ be the compositum of $F_g$ and $F_h$. If $F_h\subsetneq K$, then $F_h\cap F_g = \Q_p$. Therefore, there is an element of the Galois group of ${p}_{6,t}(x)$ that permutes the 2 roots of $g$ and cyclicly permutes the 4 roots of $h$. That is, the Galois group of ${p}_{6,t}(x)$ contains an element which is the disjoint product of a 4-cycle and a 2-cycle.

If $K=F_h$, then $F_g\subset F_h$. Let $\tau$ be a generator of the tame inertia group of $F_h$. If $\tau$ permutes the roots $g$, then the Galois group of ${p}_{6,t}(x)$ contains an element which is the disjoint product of a 4-cycle and a 2-cycle. If $\tau$ fixes the roots of $g$, then the roots of $g$ lie in $K^\tau$, the fixed field of $\tau$. However, since $\tau$ generates the inertia subgroup of $K$, we know that $K^\tau$ is an unramified extension of $\Q_p$ \cite[Proposition 9.11, p. 173]{JN}. Therefore, $F_g\subset K^\tau$ is unramified, which is a contradiction with our previous deduction that $F_g$ is a totally ramified quadratic extension of $\Q_p$. 

Since the Galois group $G$ has an element that is the product of two disjoint cycles, one a $2$-cycle 
and the other a $4$-cycle, we have shown that $G = S_{6}$.  In the case that $2 \mid e$
for every prime $p > 3$, we have $t+5 = a u^{2}$ and $t+3 = b v^{2}$ where $a$ and
$b$ are divisors of $6$.  In this case we have, for fixed $a$ and $b$, the Diophantine equation
\begin{equation}\label{pelleq}
a u^{2} - b v^{2} = 2
\end{equation}
in the variables $u$ and $v$.  
\begin{table}[h]
\centering
\renewcommand{\tabcolsep}{8pt}
\renewcommand{\arraystretch}{1.2}
\begin{tabular}{|c|c|}
\hline
Pairs $(a,b)$ & All Solutions \\ \hline \hline
$(1,2)$ & $u = 2u'$ where $(1+\sqrt{2}\,)^{2m-1} = v + \sqrt{2}\,u'$ for $m \in \mathbb Z^{+}$ \\ \hline
$(2,1)$ & $v = 2v'$ where $(1+\sqrt{2}\,)^{2m} = u + \sqrt{2}\,v'$ for $m \in \mathbb Z^{+}$ \\ \hline
$(2,3)$ & $v = 2v'$ where $(5+2\sqrt{6}\,)^{m} = u + \sqrt{6}\,v'$ for $m \in \mathbb Z^{+}$ \\ \hline
$(2,6)$ & $(2+\sqrt{3}\,)^{m} = u + \sqrt{3}\,v$ for $m \in \mathbb Z^{+}$ \\ \hline
$(3,1)$ & $(1+\sqrt{3}\,)(2+\sqrt{3}\,)^{m-1} = v + \sqrt{3}\,u$ for $m \in \mathbb Z^{+}$ \\ \hline
$(6,1)$ & $(2+\sqrt{6}\,)(5+2\sqrt{6}\,)^{m-1} = v + \sqrt{6}\,u$ for $m \in \mathbb Z^{+}$ \\ \hline
\end{tabular}
\caption{Solutions to the Pell Equations}\label{table}
\end{table}
Of the $16$ possibilities for $(a,b)$ where $a$ and $b$ divide $6$,
there are $9$ for which \eqref{pelleq} can be shown to have no solutions modulo either $3$ or $4$.  
For $(a,b) = (2,2)$, the equation \eqref{pelleq} is equivalent to $u^{2}-v^{2} = 1$.  Since consecutive
positive squares differ by more than $1$ and since $t+5 = a u^{2}$, we deduce that there are no solutions 
for $t \ge 1$.  The remaining $6$ choices of $(a,b)$ are tabulated in Table~\ref{table}.  
Here, the equation \eqref{pelleq} corresponds to a 
Pell equation which has infinitely many solutions in \textit{positive} integers $u$ and $v$ 
given by the right column in the table.
These solutions were found using classical methods for solving Pell equations (cf.~\cite{GC}), and
we do not elaborate on the details.  In each case, the solutions grow exponentially, and the total number
of solutions in pairs $(u,v)$ with $u$ and $v$ each $\le X$ is $O(\log X)$.  As 
$t+5 = a u^{2}$ and $t+3 = b v^{2}$, we deduce that the number of $t \le T$ such that $G = PGL_{2}(5)$
is at most $O(\log T)$, completing the proof of Theorem~\ref{mainthm}.

The inclusion of the phrase ``at most" in the theorem is to emphasize that we do not know that 
these exceptional pairs that arose at the end of this proof give rise to cases where $G = PGL_{2}(5)$.  
In fact, it is likely that $G = S_{6}$ for every sufficiently large $t$ when $r = 6$.  For $t \in \{ 1,3 \}$,
which arise from the two smallest solutions coming from Table~\ref{table}, one checks that 
$G = PGL_{2}(5)$.  There are $37$ other positive integer values of $t \le 10^{10}$ coming from Table~\ref{table},
and one checks that for each of these we have:

\begin{itemize}
\item
For some prime $p_{1} \le 149$, the polynomial $p_{6,t}(x)$ is an irreducible sextic polynomial modulo $p_{1}$.  
Hence, $p_{6,t}(x)$ is irreducible.
\item
The discriminant $\Delta$ of $p_{6,t}(x)$ is not a square.  Hence, 
$G$ is not contained in $A_{6}$.  (Note that by Lemma~\ref{discrimvaluelemma}, if $r = 6$, then $\Delta < 0$ 
for all non-negative integers $t$; thus, $\Delta$ cannot be a square if $r = 6$.)

\item
For some prime $p_{2} \le 101$, the polynomial $p_{6,t}(x)$ factors as a linear polynomial times
an irreducible quintic modulo $p_{2}$.  Hence, $G = PGL_{2}(5)$ or $G = S_{6}$. 
\item
For some prime $p_{3} \le 109$ not dividing the discriminant $\Delta$ of $p_{6,t}(x)$, 
the polynomial $p_{6,t}(x)$ factors as an irreducible quadratic times
an irreducible quartic modulo $p_{3}$.  Hence, $G = S_{6}$ (using Dedekind's Theorem discussed in Section 2). 
\end{itemize}

\noindent
It therefore is plausible that for $t > 3$ in general, the Galois group of $p_{6,t}(x)$ is in fact $S_{6}$.
Note that since $\tilde{p}_{r,t}(x+1) = x^{r} q_{r,t+r+1}(1/x)$, the comments about $q_{r,n}(x)$ 
after the statement of Theorem~\ref{mainthm} follow.

\end{document}